\documentclass[leqno,12pt]{article}
\setlength{\textheight}{22cm}
\setlength{\textwidth}{16cm}
\setlength{\oddsidemargin}{0cm}
\setlength{\evensidemargin}{0cm}
\setlength{\topmargin}{0cm}
\usepackage{amsmath, amssymb,soul}
\usepackage{amsthm} 


%
%
\theoremstyle{plain} 
\newtheorem{theorem}{\indent\sc Theorem}[section]
\newtheorem{lemma}[theorem]{\indent\sc Lemma}

\newtheorem{proposition}[theorem]{\indent\sc Proposition}

\numberwithin{equation}{section}
\theoremstyle{definition} 

\newtheorem{example}[theorem]{\indent\sc Example}

%

%

\newcommand\on{\operatorname}
\renewcommand\div{\on{div}}

\newcommand\Hess{\on{Hess}}
\newcommand\Ric{\on{Ric}}

\newcommand\trace{\on{trace}}

\makeatletter
%
%
\makeatother

\def\({\left( }
\def\){\right)}
\def\<{\left< }
\def\>{\right>}

\title{On conformal collineation and almost Ricci solitons}
\author{Adara M. Blaga and Bang-Yen Chen}
\date{}

\pagestyle{myheadings}

\begin{document}

\maketitle

\markboth{{\small\it {\hspace{1cm} On conformal collineation and almost Ricci solitons}}}
{\small\it{On conformal collineation and almost Ricci solitons \hspace{1cm}}}

\footnote{ 
2020 \textit{Mathematics Subject Classification}:
35C08, 35Q51, 53B05.
}
\footnote{ 
\textit{Key words and phrases}:
almost Ricci soliton, affine conformal Killing vector field, Einstein manifold
}

\begin{abstract}
We provide conditions for a Riemannian manifold with a nontrivial closed affine conformal Killing vector field to be isometric to a Euclidean sphere or to the Euclidean space. Also, we formulate some triviality results for almost Ricci solitons with affine conformal Killing potential vector field.
\end{abstract}

\section{Preliminaries}

Stationary solutions to Ricci flow \cite{ha}, Ricci solitons have recently been more and more intensively studied in (semi-)Rie\-man\-ni\-an geometry.
A (semi-)Rie\-man\-ni\-an manifold $(M,g)$ is said to be
\textit{a Ricci} (respectively, \textit{an almost Ricci}) \textit{soliton} \cite{ha,pi} if there exist a smooth vector field $\xi$ and a real constant (respectively, a smooth function) $\lambda$ such that
\begin{equation}\label{1}
\frac{1}{2}\mathcal L_{\xi}g+\Ric=\lambda g,
\end{equation}
where $\mathcal L_{\xi}$ is the Lie derivative operator in the direction of $\xi$ and $\Ric$ is the Ricci curvature tensor.
Also, a soliton is called \textit{trivial} if the potential vector field $\xi$ is a Killing vector field, that is, $\mathcal L_{\xi}g=0$. We remark that the trivial Ricci solitons are just the Einstein manifolds, that is, they satisfy $\Ric=\frac{r}{n}g$, where $r$ denotes the scalar curvature.

Almost Ricci soliton equation is closely related to the affine conformal Killing vector fields'.
Precisely, if $(M,g)$ is a Riemannian manifold and $\nabla$ is the Levi-Civita connection of $g$, then,
a smooth vector field $\xi$ on $M$ is said to be \cite{ya}

(i) a \textit{Killing vector field} if
$\mathcal L_{\xi}g=0;$

(ii) a \textit{conformal Killing vector field} if
$\mathcal L_{\xi}g=2\rho g$ for $\rho$ a smooth function on $M$;

(iii) an \textit{affine Killing vector field} if
$\mathcal L_{\xi}\nabla=0;$

(iv) an \textit{affine conformal Killing vector field} if
$\mathcal L_{\xi}\nabla=d\rho\otimes I+I\otimes d\rho-g\otimes \nabla \rho$
for $\rho$ a smooth function on $M$, where $\nabla \rho$ is the gradient of $\rho$.

\bigskip

Yano \cite{yan} proved that any affine Killing vector field is Killing, too.

\bigskip

In \cite{kat}, Katzin, Levine and Davis characterized the affine conformal Killing vector fields
and proved that $\xi$ is an affine conformal Killing vector field on a Riemannian manifold $(M,g)$ with
\begin{equation}\label{h}
\mathcal L_{\xi}\nabla=d\rho\otimes I+I\otimes d\rho-g\otimes \nabla \rho,
\end{equation}
if and only if
\begin{equation}\label{2}
\mathcal L_{\xi}g=2\rho g+K, \ \ \nabla K=0
\end{equation}
with $K$ a symmetric $(0,2)$-tensor field. If there exists such a vector field satisfying (\ref{h}) (or, equivalently, (\ref{2})), then it is said that $M$
admits a symmetry, called \textit{conformal collineation}, generated by $\xi$, condition appearing in applications from relativity theory.
We remark that the first equation from relation (\ref{2}) is just the almost Ricci soliton equation for $K=-2\Ric$. Therefore, the potential vector field of an almost Ricci soliton with parallel Ricci tensor is an affine conformal Killing vector field.

\bigskip

Since spheres posses affine conformal Killing vector fields, it is appropriate to characterize the Euclidean spheres using them.
On the other hand, the well-known theorems of Obata and Tashiro provide useful characterizations for Euclidean spheres and for the Euclidean space,
precisely, denoting by $\Hess(f)$ the hessian $(0,2)$-tensor field of a smooth function $f$ on a Riemannian manifold $(M,g)$, defined by
$\Hess(f)(X,Y):=g(\nabla_X\nabla f,Y)$ for vector fields $X,Y\in {\mathfrak X}(M)$, we have

\begin{theorem}\label{obata} \cite{ob}
If there exists a non constant smooth function $f$ on a complete Riemannian manifold $(M,g)$ satisfying $\Hess(f)=-cfg$ with $c$ a positive constant, then $M$ is isometric to the Euclidean sphere of radius $\frac{1}{\sqrt{c}}$.
\end{theorem}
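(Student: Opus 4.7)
The plan is to reduce the Hessian equation to a second-order linear ODE along unit-speed geodesics, extract a first integral that bounds $f$ globally, and then recover the round sphere via a comparison argument in geodesic polar coordinates.

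First, I would fix an arbitrary unit-speed geodesic $\gamma$ on $M$ (available for all times by completeness) and set $h(t):=f(\gamma(t))$. Differentiating twice and substituting $\Hess(f)=-cfg$ gives the linear oscillator $h''+ch=0$, so $h$ is a sinusoid of frequency $\sqrt{c}$ along every unit-speed geodesic. A direct one-line computation of the differential shows that $|\nabla f|^2+cf^2$ has vanishing gradient on $M$, hence equals a positive constant $C$ on the connected manifold $M$. As immediate consequences, $f$ is globally bounded by $\sqrt{C/c}$, and the critical set of $f$ is exactly $\{|f|=\sqrt{C/c}\}$.

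Next, I would use this first integral to show that $f$ attains its extrema and that $M$ is compact. Starting from any regular point $q$ and following the geodesic tangent to $\nabla f(q)$, the sinusoidal behaviour of $h$ produces a maximum point $p_+$ and a minimum point $p_-$ of $f$ within distance $\pi/\sqrt{c}$, so $\on{diam}(M)\le \pi/\sqrt{c}$. Fixing such a maximum point $p_+$, the ODE yields, for any $q=\exp_{p_+}(rv)$ with $|v|=1$ and $r\ge 0$, the identity $f(q)=\sqrt{C/c}\,\cos(\sqrt{c}\,r)$. Thus $f$ depends only on the distance from $p_+$, and the geodesic spheres around $p_+$ coincide with the level sets of $f$.

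The final step, which I expect to be the main obstacle, is upgrading this radial identification to a genuine isometry with $S^n(1/\sqrt{c})$. I would work in geodesic polar coordinates $(r,\theta)$ about $p_+$, writing $g=dr^2+g_r$ for the induced metric on the geodesic spheres, and restrict the hypothesis $\Hess(f)=-cfg$ to these hypersurfaces. Because their outward normal is proportional to $\nabla f$, the restriction shows the shape operator of every level set is $\sqrt{c}\cot(\sqrt{c}\,r)$ times the identity, matching the model sphere. Integrating the resulting Riccati equation for $g_r$ with the correct initial data at $p_+$ then forces $g_r=c^{-1}\sin^2(\sqrt{c}\,r)$ times the round metric on the unit sphere in $T_{p_+}M$. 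The delicate parts are controlling the initial behaviour of Jacobi fields at $p_+$ and ruling out cut points of $p_+$ strictly before $r=\pi/\sqrt{c}$, so that $\exp_{p_+}$ provides a global isometry onto $S^n(1/\sqrt{c})$ once the sphere of radius $\pi/\sqrt{c}$ in $T_{p_+}M$ is collapsed to the single point $p_-$.
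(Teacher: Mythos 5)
The paper does not prove this statement at all: it is Obata's 1962 theorem, quoted with a citation to \cite{ob} and used as a black box, so there is no in-paper argument to compare yours against. Measured against the literature, your sketch is precisely the classical proof: the ODE $h''+ch=0$ along unit-speed geodesics, the first integral $|\nabla f|^2+cf^2=C$, the identification $f=\sqrt{C/c}\,\cos(\sqrt{c}\,r)$ in polar coordinates about a maximum point, the umbilicity of the level sets with principal curvatures $\sqrt{c}\cot(\sqrt{c}\,r)$, and the integration of the radial Riccati equation to recover $g=dr^2+c^{-1}\sin^2(\sqrt{c}\,r)\,d\theta^2$. All of these steps are correct as stated.

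Two caveats. First, the inference ``a maximum and a minimum lie within distance $\pi/\sqrt{c}$ of any regular point, so $\on{diam}(M)\le\pi/\sqrt{c}$'' is not immediate: producing extrema near one point does not bound the distance between two arbitrary points, and in the standard treatment the diameter bound is extracted only after (or together with) the metric identification. Second, the endgame you flag as the main obstacle is indeed where the real work lives, and your proposal only names it rather than carrying it out: one must show $\exp_{p_+}$ has no conjugate or cut points before $r=\pi/\sqrt{c}$, extend the resulting local isometry from the punctured round sphere across the antipode $p_-$, and then rule out a nontrivial deck group (e.g.\ by noting any deck transformation preserves the pullback of $f$ and hence fixes the unique maximum point). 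As a blueprint the proposal is sound and faithful to Obata's original argument, but it is a proof outline with the hardest third deferred, not a complete proof.
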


\begin{theorem}\label{tasi} \cite{tas}
If there exists a non constant smooth function $f$ on a complete Riemannian manifold $(M,g)$ satisfying $\Hess(f)=cg$ with $c$ a non zero constant, then $M$ is isometric to the Euclidean space.
\end{theorem}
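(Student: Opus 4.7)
The plan is to produce a critical point $p_0$ of $f$, show that $\exp_{p_0}$ is a global diffeomorphism, and identify the pulled-back metric as the Euclidean one in polar coordinates. I would first rewrite the hypothesis as $\nabla_X \nabla f = cX$ for every vector field $X$. The computation $X(|\nabla f|^2)=2g(\nabla_X \nabla f,\nabla f)=2cg(X,\nabla f)$ yields $\nabla|\nabla f|^2=2c\nabla f$, so $|\nabla f|^2-2cf$ is a constant on $M$; writing it as $-2ca$ gives the first integral $|\nabla f|^2=2c(f-a)$. Assuming $c>0$ (the case $c<0$ is analogous after replacing $f$ by $-f$), along an integral curve $\gamma$ of $-\nabla f$ the relation $\tfrac{d}{dt}(f-a)=-2c(f-a)$ forces $f-a$ to decay exponentially, so $|\nabla f|$ decays like $e^{-ct}$ and the total arc length $\int_0^\infty |\nabla f(\gamma(t))|\,dt$ is finite; completeness of $(M,g)$ then ensures that $\gamma$ converges to a limit $p_0$ at which $\nabla f(p_0)=0$.

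At $p_0$, the Hessian equation yields $\tfrac{d^2}{dt^2}(f\circ\gamma_v)=c|v|^2$ along any geodesic $\gamma_v(t):=\exp_{p_0}(tv)$, which integrates to
\[ f(\exp_{p_0}(v))=f(p_0)+\tfrac{c}{2}|v|^2. \]
This identifies $f$ on the image of $\exp_{p_0}$ and, via the first integral, forces $p_0$ to be the unique critical point. Differentiating $\nabla_X\nabla f=cX$ produces the curvature identity $R(X,Y)\nabla f=0$; along a radial geodesic one checks $\nabla f(\gamma_v(t))=ct\,\gamma_v'(t)$, so this identity gives $R(\,\cdot\,,\gamma_v')\gamma_v'=0$ and the Jacobi equation reduces to $J''=0$. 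Jacobi fields vanishing at $p_0$ therefore grow linearly and cannot vanish again, so $p_0$ has no conjugate points; coupled with the observation that for $q\neq p_0$ the vector $-\nabla f(q)$ is tangent to the unique geodesic from $q$ back to $p_0$, $\exp_{p_0}$ is also injective, and with completeness one obtains a global diffeomorphism $T_{p_0}M\to M$.

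Pulling back $g$ through $\exp_{p_0}$ and writing it in polar coordinates $(r,\theta)$ centered at $p_0$, one has $\nabla f=cr\,\partial_r$, and $h(r,\theta):=g(\partial_\theta,\partial_\theta)$ satisfies $\partial_r h=2h/r$ from $\Hess(f)(\partial_\theta,\partial_\theta)=cg(\partial_\theta,\partial_\theta)$, yielding $h=r^2 h_0(\theta)$. Matching with the normal-coordinate expansion of $g$ at $p_0$ identifies $h_0$ with the round metric on the unit sphere, giving $g=dr^2+r^2 g_{S^{n-1}}$, the Euclidean metric. The main obstacle is the passage from the pointwise Hessian identity to the global Euclidean structure: both the absence of conjugate points and the identification of the angular metric hinge on the derived curvature identity $R(X,Y)\nabla f=0$ combined with completeness, and verifying that the warped form of $g$ that emerges is exactly flat rather than merely radially symmetric requires the comparison with the normal-coordinate Taylor expansion to pin down $h_0$.
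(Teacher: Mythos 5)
Your argument is correct, but note that the paper offers no proof of this statement at all: Theorem \ref{tasi} is quoted verbatim from Tashiro \cite{tas} as a known tool, so there is nothing internal to compare against. What you have written is essentially the standard proof of Tashiro's theorem, and every step checks out: the first integral $|\nabla f|^2=2c(f-a)$ is right; the exponential decay of $f-a$ along the negative gradient flow does give finite arc length, and completeness (after noting the flow is defined for all $t\ge 0$, since a finite-length trajectory on a complete manifold cannot leave every compact set in finite time) produces the critical point $p_0$; the identity $R(X,Y)\nabla f=0$ obtained by differentiating $\nabla_X\nabla f=cX$ correctly kills the curvature term in the Jacobi equation and rules out conjugate points; and your injectivity mechanism is sound once spelled out --- if a unit-speed geodesic from $p_0$ reaches $q$ at parameter $s$, then $\nabla f(q)=cs\,\gamma'(s)$ determines both $s=|\nabla f(q)|/c$ and the arrival velocity, so the geodesic is recovered from data at $q$ alone and $\exp_{p_0}$ is one-to-one. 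The final identification $h=r^2h_0(\theta)$ with $h_0$ the round metric, via the normal-coordinate expansion at $p_0$, correctly pins down $g=dr^2+r^2g_{S^{n-1}}$. The only places where the write-up is terser than a referee would like are the two just mentioned (maximal interval of the gradient flow, and the uniqueness-of-geodesics step), but both are gaps of exposition, not of substance.
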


It is known that the sphere with the canonical metric is an almost Ricci soliton. Two natural questions arise in this context:

{\it When an almost Ricci soliton is trivial?} and

{\it Under which conditions, a compact almost Ricci soliton is isometric to a sphere?}

Having these in mind, in the present paper, based on Obata's and Tashiro's theorems, we shall characterize Euclidean spheres and the Euclidean space by means of affine conformal Killing vector fields.
Then, we prove some triviality results for almost Ricci solitons with this kind of potential vector field. We remark that an almost Ricci soliton of dimension $>2$ with an affine Killing or conformal Killing potential vector field is an Einstein manifold, and we show that under certain assumptions, a connected almost Ricci soliton with affine conformal Killing potential vector field is an Einstein manifold, too, and the vector field is affine Killing.

\section{Characterizing the Euclidean spheres}

Let $\xi$ be a vector field satisfying (\ref{2}). If we define the $(1,1)$-tensor field $H$ by
\linebreak
$g(HX,Y):=K(X,Y)$ for $X,Y\in {\mathfrak X}(M)$, then we find from $\nabla K=0$ that $\trace(H)$ is a constant.
Denoting by $\eta$ the $g$-dual $1$-form of $\xi$ and considering the skew-symmetric $(1,1)$-tensor field $J$ defined by $g(JX,Y):=\frac{1}{2}(d\eta)(X,Y)$ for $X,Y\in {\mathfrak X}(M)$, relations (\ref{2}) can be written as
\begin{equation*}\label{3}
\nabla \xi=\rho I+J+\frac{1}{2}H,  \ \ \nabla H=0.
\end{equation*}
We remark that if $\xi$ is closed and $H=0$, then $\xi$ is precisely a concircular vector field.

Let $R$, $\Ric$, $Q$ and $r$ be the Riemannian curvature tensor, Ricci curvature tensor, Ricci operator and scalar curvature of an $n$-dimensional compact Rieman\-ni\-an manifold $(M,g)$. By a direct computation and taking into account that
$\nabla H=0$, we derive
\begin{equation*}
R(\cdot\, ,\cdot)\xi=d\rho\otimes I+I\otimes d\rho +d^J,
\end{equation*}
where $d^J(X,Y):=(\nabla_XJ)Y-(\nabla_YJ)X$ for $X,Y\in {\mathfrak X}(M)$, and, by tracing it, we infer
\begin{equation*}
Q\xi=-(n-1)\nabla \rho-\sum_{i=1}^n(\nabla_{E_i}J)E_i,
\end{equation*}
where $\{E_i\}_{1\leq i\leq n}$ is a local orthonormal frame on $(M,g)$.
Also, by taking the trace in (\ref{2}), we get
\begin{equation}\label{15}
\div(\xi)=n\rho+\frac{1}{2}\trace(H),
\end{equation}
which, by the divergence theorem, implies
\begin{equation}\label{7}
\int_M(2n\rho+\trace(H))=0.
\end{equation}
From
\begin{equation*}
\div(\rho\xi)=\rho\div(\xi)+g(\nabla \rho,\xi)
\end{equation*}
we find
\begin{equation*}
\int_Mg(\nabla \rho,\xi)=-\frac{1}{2}\int_M\rho(2n\rho+\trace(H)).
\end{equation*}
Taking into account that $\trace(HJ)=0$, we get
\begin{equation*}
\div(J\xi)=-|J|^2+(n-1)g(\nabla \rho,\xi)+\Ric(\xi,\xi),
\end{equation*}
which implies
\begin{equation}\label{10}
\int_M\Ric(\xi,\xi)=\int_M\Big(|J|^2+\frac{n-1}{2}\rho(2n\rho+\trace(H))\Big).
\end{equation}
Computing $\div(Q\xi)$, we infer
\begin{equation*}
\div(Q\xi)=\rho r+\frac{1}{2}\xi(r)+\frac{1}{2}\trace(HQ)
\end{equation*}
and we obtain
\begin{equation*}
\int_M\big(2\rho r+\xi(r)+\trace(HQ)\big)=0.
\end{equation*}
From
\[
\div(\rho Q\xi)=\rho\div(Q\xi)+g(\nabla \rho,Q\xi)
=\frac{\rho}{2}\big(2\rho r+\xi(r)+\trace(HQ)\big)+\Ric(\nabla \rho,\xi)
\]
we find
\begin{equation}\label{4}
\int_M \Ric(\nabla \rho,\xi)=-\frac{1}{2}\int_M\rho\big(2\rho r+\xi(r)+\trace(HQ)\big).
\end{equation}
Using
\[
\div(\Delta(\rho)\xi)=\Delta(\rho)\div(\xi)+g(\nabla (\Delta(\rho)),\xi)
\]
\[
=\frac{n}{2}\Delta (\rho^2)-n|\nabla \rho|^2+\frac{\trace(H)}{2}\Delta(\rho)+g(\nabla (\Delta(\rho)),\xi),
\]
and taking into account that $\trace(H)$ is constant, we get
\begin{equation}\label{5}
\int_M g(\nabla (\Delta(\rho)),\xi)=n\int_M|\nabla \rho|^2.
\end{equation}

Let $H_{\rho}$ be the symmetric $(1,1)$-tensor field defined by $g(H_{\rho}X,Y):=\Hess(\rho)(X,Y)$ for $X,Y\in {\mathfrak X}(M)$. Since
$\trace(JH_{\rho})=0$, we have
\begin{equation*}
g(\nabla (\Delta(\rho)),\xi)+\Ric(\nabla \rho,\xi)=\sum_{i=1}^ng((\nabla_{E_i}H_{\rho})E_i,\xi)
\end{equation*}
\[
=\sum_{i=1}^ng((\nabla_{E_i}H_{\rho})\xi,E_i):=\sum_{i=1}^ng(\nabla_{E_i}H_{\rho}\xi,E_i)-\sum_{i=1}^ng(\nabla_{E_i}\xi,H_{\rho}E_i)
\]
\[
=\div(H_{\rho}\xi)-\rho \sum_{i=1}^ng(E_i,H_{\rho}E_i)-\sum_{i=1}^ng(J{E_i},H_{\rho}E_i)-\frac{1}{2}\sum_{i=1}^ng(H{E_i},H_{\rho}E_i)
\]
\[
=\div(H_{\rho}\xi)-\rho\Delta(\rho)-\frac{1}{2}\trace(HH_{\rho})
\]
\[=\div(H_{\rho}\xi)-\frac{1}{2}\Delta(\rho^2)+|\nabla \rho|^2-\frac{1}{2}\trace(HH_{\rho})
\]
and we obtain
\begin{equation*}
\int_M |\nabla \rho|^2=\int_Mg(\nabla (\Delta(\rho)),\xi)+\int_M\Ric(\nabla \rho,\xi)+\frac{1}{2}\int_M\trace(HH_{\rho}),
\end{equation*}
which, by means of (\ref{4}) and (\ref{5}), implies
\begin{equation*}
2(n-1)\int_M |\nabla \rho|^2=\int_M\Big(\rho\big(2\rho r+\xi(r)+\trace(HQ)\big)-\trace(HH_{\rho})\Big).
\end{equation*}
Then we can state

\begin{proposition}
Let $\xi$ be a vector field on an $n$-dimensional compact Riemannian manifold $(M,g)$ satisfying (\ref{2}). Then
\begin{equation}\label{6}
2(n-1)\int_M |\nabla \rho|^2=\int_M\Big(\rho(2\rho r+\xi(r))+\trace(H\circ (\rho Q-H_{\rho}))\Big).
\end{equation}
\end{proposition}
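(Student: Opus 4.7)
The plan is to prove the formula by integrating divergences of several well-chosen vector fields built from $\xi$, $\rho$, $Q$, and the Hessian endomorphism $H_{\rho}$, and combining the resulting integral identities. The structural input is the decomposition $\nabla\xi=\rho I+J+\tfrac12 H$ with $\nabla H=0$, together with the skew-symmetry of $J$, which forces the mixed traces $\trace(JQ)$, $\trace(JH_{\rho})$, and $\trace(JH)$ to vanish.

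First I would collect two preparatory integral identities on the compact $M$. From $\div\xi=n\rho+\tfrac12\trace(H)$ and the constancy of $\trace(H)$, computing $\div(\Delta(\rho)\xi)$ and applying the divergence theorem yields
$$\int_M g(\nabla\Delta\rho,\xi)=n\int_M|\nabla\rho|^2.$$
Using the contracted Bianchi identity $\div Q=\tfrac12\nabla r$ together with $\trace(JQ)=0$, the computation of $\div(Q\xi)$ and then $\div(\rho Q\xi)$ delivers
$$\int_M\Ric(\nabla\rho,\xi)=-\tfrac12\int_M\rho\bigl(2\rho r+\xi(r)+\trace(HQ)\bigr).$$

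The main step is a Bochner-type identity for $H_{\rho}\xi$. Via the commutation formula $\sum_i(\nabla_{E_i}H_{\rho})E_i=\nabla(\Delta\rho)+Q(\nabla\rho)$ and the symmetry of $(\nabla_X H_{\rho})$ in its two covector slots, one rewrites
$$g(\nabla\Delta\rho,\xi)+\Ric(\nabla\rho,\xi)=\sum_{i}g((\nabla_{E_i}H_{\rho})\xi,E_i)=\div(H_{\rho}\xi)-\sum_{i}g(\nabla_{E_i}\xi,H_{\rho}E_i).$$
Plugging in $\nabla_{E_i}\xi=\rho E_i+JE_i+\tfrac12 HE_i$ and using $\trace(JH_{\rho})=0$, the last sum collapses to $\rho\Delta\rho+\tfrac12\trace(HH_{\rho})$; after replacing $\rho\Delta\rho$ by $\tfrac12\Delta(\rho^2)-|\nabla\rho|^2$ and integrating, one obtains
$$\int_M|\nabla\rho|^2=\int_M g(\nabla\Delta\rho,\xi)+\int_M\Ric(\nabla\rho,\xi)+\tfrac12\int_M\trace(HH_{\rho}).$$

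Finally I would substitute the two preparatory identities into this last relation and rearrange, absorbing the terms $\tfrac12\trace(HH_{\rho})$ and $\tfrac12\rho\,\trace(HQ)$ into the single expression $\trace(H\circ(\rho Q-H_{\rho}))$. The expected obstacle is purely clerical: tracking the signs coming from the divergence theorem and verifying that every mixed trace involving $J$ is eliminated by its skewness. Once that bookkeeping is in place, the identity \e{6} follows by linearity.
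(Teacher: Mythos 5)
Your proposal is correct and follows essentially the same route as the paper: the same two preparatory identities from $\div(\Delta(\rho)\xi)$ and $\div(\rho Q\xi)$, the same contracted second Bianchi/commutation identity for $\div(\Hess(\rho))$ applied to $H_{\rho}\xi$, and the same elimination of the $J$-terms by skew-symmetry before combining. The sign bookkeeping you flag works out exactly as you describe, yielding \eqref{6}.
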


\noindent And we conclude

\begin{proposition}
An affine conformal Killing vector field $\xi$ on an $n$-dimen\-sion\-al compact and connected Riemannian manifold $(M,g)$ satisfying (\ref{2}) and
\[
\int_M\trace(H\circ (H_{\rho}-\rho Q))\geq \int_M\rho(2\rho r+\xi(r))
\]
is an affine Killing vector field.
\end{proposition}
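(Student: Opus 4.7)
The plan is to exploit the integral identity (\ref{6}) of the preceding proposition and rewrite it so that the integrand on the right-hand side is exactly the quantity that the hypothesis controls. Indeed, rearranging the trace term, equation (\ref{6}) reads
\[
2(n-1)\int_M |\nabla \rho|^2 = \int_M \rho(2\rho r + \xi(r)) - \int_M \trace\bigl(H\circ (H_{\rho}-\rho Q)\bigr).
\]
The hypothesis asserts precisely that the right-hand side is non-positive. Since the left-hand side is a non-negative multiple (recall $n\geq 2$; if $n=1$ the statement is vacuous since then $\xi$ is automatically Killing) of $\int_M |\nabla \rho|^2 \geq 0$, both sides must vanish.

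From $\int_M |\nabla \rho|^2 = 0$ one concludes $\nabla \rho \equiv 0$ on $M$, and using the connectedness of $M$ this forces $\rho$ to be a constant function. Consequently $d\rho=0$ and $\nabla \rho = 0$, so the defining relation (\ref{h}) for an affine conformal Killing vector field,
\[
\mathcal L_{\xi}\nabla = d\rho\otimes I + I\otimes d\rho - g\otimes \nabla \rho,
\]
reduces to $\mathcal L_{\xi}\nabla=0$. By definition this means that $\xi$ is an affine Killing vector field, which is exactly the desired conclusion.

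There is essentially no technical obstacle: the proof is a one-line reading of (\ref{6}) combined with the sign hypothesis, followed by the standard consequence that a function with vanishing gradient on a connected manifold is constant. The only point that requires a modicum of care is to keep track of the sign when moving the trace term from one side to the other, which is why it is convenient to state the hypothesis in the form $\int_M\trace(H\circ(H_{\rho}-\rho Q))\geq\int_M\rho(2\rho r+\xi(r))$ rather than the reversed inequality. No appeal to Obata's or Tashiro's theorem is needed for this particular proposition; they would enter only if one wanted, in addition, to identify $M$ with a sphere or Euclidean space.
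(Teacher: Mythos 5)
Your proof is correct and is exactly the argument the paper intends: the proposition is stated as an immediate consequence of identity (\ref{6}), whose right-hand side the sign hypothesis forces to be non-positive, giving $\nabla\rho\equiv 0$, $\rho$ constant, and hence $\mathcal L_{\xi}\nabla=0$ via (\ref{h}). Your handling of the sign flip $\trace(H\circ(\rho Q-H_{\rho}))=-\trace(H\circ(H_{\rho}-\rho Q))$ and the use of connectedness to conclude $\rho$ is constant are both accurate.
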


Denote by $\lambda_1$ be the first non zero eigenvalue of $-\Delta$. Then we can prove
\begin{theorem}
Let $\xi$ be a nontrivial closed (in particular, gradient) affine conformal Killing vector field on an $n$-dimensional compact Riemannian manifold $(M,g)$, $n>1$, of constant scalar curvature $r\neq 0$ satisfying (\ref{2}).
If $\lambda_1\geq nc$,
\[
\trace(H)=\trace(HQ)=\trace(HH_{\rho})=0
\]
and
\[
\Ric\big(\nabla\rho+c\xi,\nabla\rho+c\xi\big)\geq 0
\]
for $c=\frac{r}{n(n-1)}$, then $M$ is isometric to a Euclidean sphere.
\end{theorem}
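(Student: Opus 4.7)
The plan is to combine the integral identities \eqref{7}, \eqref{4}, \eqref{10} and \eqref{6} derived in this section with the Bochner formula and Obata's characterisation of spheres (Theorem~\ref{obata}).

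Because $\xi$ is closed, its $g$-dual $1$-form is closed and hence the skew tensor $J$ defined after \eqref{2} vanishes; in particular $\nabla\xi=\rho I+\tfrac12 H$. Feeding the hypotheses into the global identities already derived, $\trace(H)=0$ together with \eqref{7} yields $\int_M\rho=0$, while $r$ constant (so $\xi(r)=0$) combined with $\trace(HQ)=\trace(HH_\rho)=0$ collapses \eqref{6} to
\[
\int_M|\nabla\rho|^2=\frac{r}{n-1}\int_M\rho^2=nc\int_M\rho^2.
\]
Since $\rho$ has zero mean, the Poincar\'e inequality gives $\lambda_1\int_M\rho^2\le\int_M|\nabla\rho|^2$, and the hypothesis $\lambda_1\ge nc$ therefore forces $\rho$ (nonzero by nontriviality) to be a first eigenfunction with $\Delta\rho=-nc\rho$. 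In particular $c>0$, equivalently $r>0$.

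Next I reuse \eqref{4} and \eqref{10}. With $J=0$, $\trace(H)=0$, $\trace(HQ)=0$ and $\xi(r)=0$ they reduce to
\[
\int_M\Ric(\nabla\rho,\xi)=-n(n-1)c\int_M\rho^2,\qquad \int_M\Ric(\xi,\xi)=n(n-1)\int_M\rho^2,
\]
so expanding the quadratic form appearing in the hypothesis gives
\[
\int_M\Ric(\nabla\rho+c\xi,\nabla\rho+c\xi)=\int_M\Ric(\nabla\rho,\nabla\rho)-n(n-1)c^2\int_M\rho^2.
\]
The Bochner formula for $\rho$ on compact $M$ rewrites the first term as $\int_M((\Delta\rho)^2-|\Hess(\rho)|^2)$; inserting $\Delta\rho=-nc\rho$ and the pointwise Cauchy--Schwarz bound $|\Hess(\rho)|^2\ge(\Delta\rho)^2/n$ bounds it above by $n(n-1)c^2\int_M\rho^2$. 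Hence the integral in the display is $\le 0$.

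The pointwise hypothesis $\Ric(\nabla\rho+c\xi,\nabla\rho+c\xi)\ge 0$ then forces the integrand to vanish identically, which in turn means the Cauchy--Schwarz bound is a pointwise equality, i.e.\ $\Hess(\rho)=-c\rho\,g$ with $c>0$. Since $\rho$ is nonconstant (it has zero mean and is not identically zero), Theorem~\ref{obata} applied to $f=\rho$ identifies $(M,g)$ with the Euclidean sphere of radius $1/\sqrt{c}$. The main obstacle I anticipate is algebraic bookkeeping: the choice $c=r/(n(n-1))$ is precisely what makes the cross term $-2c\cdot n(n-1)c+c^2\cdot n(n-1)=-n(n-1)c^2$ match the Bochner plus Cauchy--Schwarz bound, so assembling the identities with the correct coefficients is the delicate part of the argument.
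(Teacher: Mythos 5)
Your proposal is correct and follows essentially the same route as the paper: the same use of \eqref{7} and \eqref{6} to get $\int_M\rho=0$ and $(n-1)\int_M|\nabla\rho|^2=r\int_M\rho^2$, the same eigenvalue argument forcing $\Delta\rho=-nc\rho$, and then Obata's theorem. The only difference is that the paper defers the final stage (the Bochner formula, the Cauchy--Schwarz bound $|\Hess(\rho)|^2\ge(\Delta\rho)^2/n$, and the identities \eqref{4}, \eqref{10} combined with the Ricci positivity hypothesis to force $\Hess(\rho)=-c\rho\,g$) to the proof of Theorem 1.1 in \cite{sharief}, whereas you write it out explicitly --- and your computation matches the one the paper itself performs in the subsequent, more general proposition.
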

\begin{proof}
Since $\xi$ is closed, $d\eta=0$ and $J=0$. From (\ref{6}) and (\ref{7}) we have
\begin{equation*}
(n-1)\int_M |\nabla \rho|^2=r\int_M\rho^2
\end{equation*}
(so $r> 0$) and
\begin{equation*}
\int_M \rho=0.
\end{equation*}
By the minimum principle and from the hypotheses, we get
\[
r\int_M\rho^2\leq (n-1)\lambda_1\int_M\rho^2\leq (n-1)\int_M|\nabla \rho|^2=r\int_M\rho^2,
\]
so, the equality holds if and only if $\Delta(\rho)=-\lambda_1 \rho$. Also, $r=(n-1)\lambda_1$ and we can write
$\Delta(\rho)=-nc\rho$.

Further, following the same steps as in the proof of Theorem 1.1 from \cite{sharief}, we get the conclusion by Obata's Theorem \ref{obata}.
\end{proof}

If we drop the closeness condition on $\xi$, we can state the previous theorem for a more general case. Precisely
\begin{proposition}
Let $\xi$ be a nontrivial affine conformal Killing vector field on an $n$-dimensional compact Riemannian manifold $(M,g)$, $n>1$, of constant scalar curvature $r\neq 0$ satisfying (\ref{2}).
If $\lambda_1\geq nc$,
\[
\trace(H)=0, \ \ \int_M\trace(HH_{\rho})=\int_M\rho\trace(HQ),
\]
\[
c\int_M|J|^2\leq \int_M\trace\Big(HH_{\rho}-\frac{(n-1)c\rho}{2}H\Big)
\]
and
\[
\Ric\big(\nabla\rho+c\xi,\nabla\rho+c\xi\big)\geq 0
\]
for $c=\frac{r}{n(n-1)}$, then $M$ is isometric to a Euclidean sphere.
\end{proposition}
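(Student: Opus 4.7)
The plan is to follow the strategy of the previous theorem while carefully tracking the extra contributions coming from the nonzero skew part $J$ of $\nabla\xi$. First, I specialize identity (\ref{6}): since $r$ is constant, $\xi(r)=0$, and the hypothesis $\int_M\trace(HH_\rho)=\int_M\rho\trace(HQ)$ cancels the remaining trace terms on the right, leaving
\[
(n-1)\int_M|\nabla\rho|^2 = r\int_M\rho^2,
\]
which forces $r>0$. Combined with $\trace(H)=0$, identity (\ref{7}) gives $\int_M\rho=0$, so the minimum principle $\int_M|\nabla\rho|^2 \geq \lambda_1\int_M\rho^2$ applies. The assumption $\lambda_1\geq nc$ then produces
\[
r\int_M\rho^2 = (n-1)\int_M|\nabla\rho|^2 \geq (n-1)\lambda_1\int_M\rho^2 \geq r\int_M\rho^2,
\]
forcing equality throughout, so $\rho$ is a first eigenfunction with $\Delta\rho=-nc\rho$.

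Next, I would expand the hypothesis $\int_M\Ric(\nabla\rho+c\xi,\nabla\rho+c\xi)\geq 0$ and substitute (\ref{4}) and (\ref{10}). With $r=n(n-1)c$, the coefficient of $\int_M\rho^2$ collapses to $-n(n-1)c^2$; using the trace hypothesis $\int_M\rho\trace(HQ)=\int_M\trace(HH_\rho)$, together with the observation that, because $\trace(H)$ is constant and $\int_M\rho=0$, the stated $|J|^2$-hypothesis simplifies to $c\int_M|J|^2\leq \int_M\trace(HH_\rho)$, the expansion is designed to yield
\[
\int_M\Ric(\nabla\rho,\nabla\rho) \geq n(n-1)c^2\int_M\rho^2.
\]

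Finally, I invoke the Bochner formula for $\rho$, namely $\int_M|\Hess(\rho)|^2 = \int_M(\Delta\rho)^2 - \int_M\Ric(\nabla\rho,\nabla\rho)$, which together with $\Delta\rho=-nc\rho$ gives $\int_M|\Hess(\rho)|^2\leq nc^2\int_M\rho^2$. A direct expansion of $|\Hess(\rho)+c\rho\,g|^2$, using $\Delta\rho=-nc\rho$ to simplify the cross term $2c\rho\,\trace(\Hess(\rho))$, then produces $\int_M|\Hess(\rho)+c\rho\,g|^2\leq 0$, forcing $\Hess(\rho)=-c\rho\,g$. Nontriviality together with $\int_M\rho=0$ ensures $\rho$ is non-constant, so Theorem \ref{obata} identifies $M$ with the Euclidean sphere of radius $1/\sqrt{c}$. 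The main subtlety is the bookkeeping of the $J$-contributions: in the closed case of the previous theorem $J\equiv 0$ makes these terms disappear, while here the new coupling between $\int_M|J|^2$ and $\int_M\trace(HH_\rho)$ is calibrated precisely to absorb the $c^2\int_M|J|^2$ arising from $\Ric(\xi,\xi)$ via (\ref{10}), so that the same Bochner-plus-Obata conclusion still goes through.
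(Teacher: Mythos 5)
Your proposal is correct and follows essentially the same route as the paper: reduce (\ref{6}) and (\ref{7}) to get $(n-1)\int_M|\nabla\rho|^2=r\int_M\rho^2$ and $\int_M\rho=0$, use the eigenvalue hypothesis to get $\Delta\rho=-nc\rho$, combine (\ref{4}) and (\ref{10}) with the trace and Ricci hypotheses, and force equality in the Schwartz/Bochner step to obtain $\Hess(\rho)=-c\rho\,g$ and conclude via Obata. The only difference is organizational — you run the inequality as $\int_M\Ric(\nabla\rho,\nabla\rho)\geq n(n-1)c^2\int_M\rho^2$ and then expand $|\Hess(\rho)+c\rho\,g|^2$, while the paper bounds $\int_M\Ric(\nabla\rho,\nabla\rho)$ from above first; these are the same computation.
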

\begin{proof}
By a direct computation, we obtain
\[
\Ric(\nabla\rho,\nabla\rho)=\div(H_{\rho}(\nabla \rho))-|H_{\rho}|^2-g(\nabla(\Delta(\rho)),\nabla \rho)
\]
and taking into account that $\Delta(\rho)=-nc\rho$, $\int_M|\nabla \rho|^2=nc\int_M\rho^2$, by means of
Schwartz's inequality, $|H_{\rho}|^2\geq \frac{(\Delta(\rho))^2}{n}$, we get
\[
\int_M\Ric(\nabla\rho,\nabla\rho)=-\int_M|H_{\rho}|^2+nc\int_M|\nabla \rho|^2\leq n(n-1)c^2\int_M\rho^2.
\]
Then, together with (\ref{4}) and (\ref{10}) imply
\[
\int_M\Ric\big(\nabla\rho+c\xi,\nabla\rho+c\xi\big)=\int_M\Big(\Ric(\nabla\rho,\nabla\rho)+2c\Ric(\nabla\rho,\xi)+c^2\Ric(\xi,\xi)\Big)
\]
\[
\leq
n(n-1)c^2\! \int_M\rho^2
-c\int_M\rho\big(2\rho r+\xi(r)+\trace(HQ)\big)
\]
\[
+c^2 \! \int_M\! \Big(|J|^2+\frac{n-1}{2}\rho(2n\rho+\trace(H))\Big)
\]
\[
=c^2\int_M\Big(|J|^2+\frac{(n-1)\rho}{2}\trace(H)-\frac{\rho}{c}\trace(HQ)\Big)
\]
and using the hypotheses, we get the conclusion.
\end{proof}

\begin{example}
In \cite{du}, Duggal showed that, for $K:=\omega\otimes \eta+\eta\otimes \omega$ in (\ref{2}) with $\omega$ a $1$-form and $\eta$ the $g$-dual $1$-form of $\xi$, the vector field $\xi$ is affine conformal Killing.
\end{example}

If we assume that $\xi$ satisfies (\ref{2}) and if we denote by $\zeta$ the $g$-dual vector field of $\omega$, then
\[
H=\omega\otimes \xi+\eta\otimes \zeta
\]
and
\[
\trace\big(H\circ(\rho Q-H_{\rho})\big)=2\big(\rho \Ric-\Hess(\rho)\big)(\xi,\zeta).
\]
Then we can state

\begin{proposition}
Let $\xi$ be a vector field on an $n$-dimensional compact Riemannian manifold $(M,g)$ satisfying (\ref{2}) with $K:=\omega\otimes \eta+\eta\otimes \omega$. Then
\begin{equation*}
2(n-1)\int_M |\nabla \rho|^2=\int_M\rho(2\rho r+\xi(r))+2\int_M\big(\rho \Ric-\Hess(\rho)\big)(\xi,\zeta),
\end{equation*}
where $\zeta$ the $g$-dual vector field of the $1$-form $\omega$.
\end{proposition}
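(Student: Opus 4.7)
The plan is to reduce the statement to the integral identity (\ref{6}) of the first Proposition in Section~2, and then simplify the trace term $\trace(H\circ(\rho Q-H_{\rho}))$ using the explicit form of $H$ given by $K=\omega\otimes\eta+\eta\otimes\omega$.

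First, I would read off the $(1,1)$-tensor $H$ from the definition $g(HX,Y)=K(X,Y)=\omega(X)\eta(Y)+\eta(X)\omega(Y)$; since $\omega=g(\zeta,\cdot)$ and $\eta=g(\xi,\cdot)$, this forces $HX=\omega(X)\xi+\eta(X)\zeta$, i.e.\ $H=\omega\otimes\xi+\eta\otimes\zeta$, as recorded in the paragraph preceding the statement. Second, I would set $A:=\rho Q-H_{\rho}$ and note that $A$ is a self-adjoint $(1,1)$-tensor with respect to $g$, because both $Q$ (coming from the symmetric Ricci tensor) and $H_{\rho}$ (coming from $\Hess(\rho)$, which is symmetric since $\nabla$ is torsion-free) are self-adjoint.

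The key step is the trace computation. Working in a local orthonormal frame $\{E_i\}_{1\leq i\leq n}$ and using $H(AE_i)=\omega(AE_i)\xi+\eta(AE_i)\zeta$ together with the self-adjointness of $A$, one has
$$\trace(H\circ A)=\sum_{i=1}^{n}g(H(AE_i),E_i)=\sum_{i=1}^{n}\bigl(g(A\zeta,E_i)\,g(\xi,E_i)+g(A\xi,E_i)\,g(\zeta,E_i)\bigr).$$
Applying the identity $\sum_i g(V,E_i)E_i=V$ to $V=\xi$ and $V=\zeta$, each sum collapses and gives $g(A\zeta,\xi)+g(A\xi,\zeta)=2g(A\xi,\zeta)$, so
$$\trace\bigl(H\circ(\rho Q-H_{\rho})\bigr)=2\bigl(\rho\Ric-\Hess(\rho)\bigr)(\xi,\zeta).$$

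Finally, I would substitute this expression into the identity (\ref{6}) from the first Proposition of Section~2, which yields exactly the stated formula. The only non-routine step is the reduction of the trace, and even that is elementary once one uses the self-adjointness of $\rho Q-H_{\rho}$; no extra compactness or curvature hypothesis is needed beyond what is already assumed for (\ref{6}).
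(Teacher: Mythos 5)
Your proposal is correct and follows the same route as the paper: the paper likewise reads off $H=\omega\otimes\xi+\eta\otimes\zeta$, asserts the identity $\trace\bigl(H\circ(\rho Q-H_{\rho})\bigr)=2\bigl(\rho\Ric-\Hess(\rho)\bigr)(\xi,\zeta)$ (which you verify in detail via the self-adjointness of $\rho Q-H_{\rho}$), and substitutes into the integral formula (\ref{6}). Your write-up simply makes explicit the trace computation that the paper leaves as a one-line claim.
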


Since
\[
\trace(H)=2g(\xi,\zeta), \ \trace(HQ)=2\Ric(\xi,\zeta), \ \trace(HH_{\rho})=2\Hess(\rho)(\xi,\zeta),
\]
if we denote by $\lambda_1$ the first non zero eigenvalue of $-\Delta$, we can state

\begin{proposition}
Let $\xi$ be a nontrivial closed (in particular, gradient) affine conformal Killing vector field on an $n$-dimensional compact Riemannian manifold $(M,g)$, $n>1$, of constant scalar curvature $r\neq 0$ satisfying (\ref{2}) with $K:=\omega\otimes \eta+\eta\otimes \omega$, and let $\zeta$ be the $g$-dual vector field of the $1$-form $\omega$. If
$\lambda_1\geq nc$,
\[
g(\xi,\zeta)=\Ric(\xi,\zeta)=\Hess(\rho)(\xi,\zeta)=0
\]
and
\[
\Ric\big(\nabla\rho+c\xi,\nabla\rho+c\xi\big)\geq 0
\]
for $c=\frac{r}{n(n-1)}$, then $M$ is isometric to a Euclidean sphere.
\end{proposition}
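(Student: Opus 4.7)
The plan is to reduce this proposition to the earlier theorem (in the same section) by substituting the identities displayed just above the statement, namely $\trace(H) = 2g(\xi,\zeta)$, $\trace(HQ) = 2\Ric(\xi,\zeta)$, and $\trace(HH_{\rho}) = 2\Hess(\rho)(\xi,\zeta)$. The three pointwise vanishing conditions of the present hypothesis then exactly translate into $\trace(H) = \trace(HQ) = \trace(HH_{\rho}) = 0$, while the remaining assumptions ($\lambda_1 \geq nc$, the Ricci positivity condition, and the closedness of $\xi$) are common to both statements. So one could in principle simply cite the earlier theorem; the cleanest self-contained route is to rerun its argument in this explicit form.

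First, closedness of $\xi$ implies $d\eta = 0$ and hence $J = 0$. Feeding the three vanishing conditions into the proposition immediately preceding the statement (and using $\xi(r) = 0$ since $r$ is constant) collapses its integral identity to $(n-1)\int_M |\nabla\rho|^2 = r\int_M\rho^2$, forcing $r>0$. At the same time, identity (\ref{7}) together with $\trace(H) = 2g(\xi,\zeta) = 0$ yields $\int_M\rho = 0$. The minimum principle for $-\Delta$ then gives
\[
\lambda_1 \int_M\rho^2 \leq \int_M|\nabla\rho|^2 = \frac{r}{n-1}\int_M\rho^2,
\]
so the hypothesis $\lambda_1 \geq r/(n-1) = nc$ forces equality throughout, i.e.\ $\Delta(\rho) = -nc\rho$, with $\rho$ a non-constant eigenfunction.

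The final step is the Bochner--Obata argument. Starting from the standard identity
\[
\Ric(\nabla\rho,\nabla\rho) = \on{div}(H_{\rho} \nabla\rho) - |H_{\rho}|^2 - g(\nabla\Delta\rho,\nabla\rho),
\]
and combining with $\Delta\rho = -nc\rho$, $\int_M|\nabla\rho|^2 = nc\int_M\rho^2$, and the Schwarz inequality $|H_{\rho}|^2 \geq (\Delta\rho)^2/n$, one obtains $\int_M \Ric(\nabla\rho,\nabla\rho) \leq n(n-1)c^2\int_M\rho^2$. Inserting the simplified forms of (\ref{4}) and (\ref{10}) (where now $J = 0$, $\xi(r) = 0$, $\trace(H) = 0$ and $\trace(HQ) = 2\Ric(\xi,\zeta) = 0$), a direct expansion gives $\int_M \Ric(\nabla\rho+c\xi,\nabla\rho+c\xi) \leq 0$. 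The Ricci positivity hypothesis then forces equality throughout, in particular equality in Schwarz, which reads $\Hess(\rho) = -c\rho\,g$. Obata's Theorem \ref{obata} then concludes that $M$ is isometric to a Euclidean sphere of radius $1/\sqrt{c}$.

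The main obstacle I anticipate is the careful bookkeeping in the final expansion: one has to track the identification $r = n(n-1)c$ so that the three contributions $\int_M\Ric(\nabla\rho,\nabla\rho)$, $2c\int_M\Ric(\nabla\rho,\xi)$, and $c^2\int_M\Ric(\xi,\xi)$ combine into the zero upper bound. For this, (\ref{4}) and (\ref{10}) must be applied in their fully simplified form and the cross terms shown to cancel exactly against $n(n-1)c^2\int_M\rho^2$, after which the Schwarz equality case delivers the Obata hypothesis.
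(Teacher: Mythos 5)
Your proposal is correct and matches the paper's (implicit) argument: the paper states this proposition without proof precisely because the identities $\trace(H)=2g(\xi,\zeta)$, $\trace(HQ)=2\Ric(\xi,\zeta)$, $\trace(HH_{\rho})=2\Hess(\rho)(\xi,\zeta)$ translate the hypotheses into those of the earlier theorem, which is exactly your reduction. Your rerun of the eigenvalue and Bochner--Obata steps also agrees with how the paper carries them out in the neighbouring proofs.
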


\section{Characterizing the Euclidean space}

Consider now $\xi$ a nontrivial closed vector field on an $n$-dimensional connected Riemannian manifold $(M,g)$ satisfying (\ref{2}), and assume that
$(M,\nabla f,\lambda,g)$ is a gradient almost Ricci soliton. Using the notations from the previous section, firstly we prove the next lemma.

\begin{lemma}
The following relations hold
\begin{equation}\label{c}
\xi(\rho)\xi=|\xi|^2\nabla \rho, \ \
\xi(\rho)\eta=|\xi|^2d\rho.
\end{equation}
\end{lemma}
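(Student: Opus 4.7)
The plan is to derive both identities in \eqref{c} directly from the curvature formula for $R(\cdot,\cdot)\xi$ already established in Section 2, without invoking the gradient almost Ricci soliton structure at all. First I would note that since $\xi$ is closed, $d\eta = 0$, so the skew-symmetric $(1,1)$-tensor $J$ defined by $g(JX,Y) = \tfrac{1}{2}(d\eta)(X,Y)$ vanishes identically. This forces $d^J \equiv 0$, and hence the general identity
\[
R(X,Y)\xi = d\rho\otimes I + I\otimes d\rho + d^J
\]
collapses to $R(X,Y)\xi = X(\rho)Y - Y(\rho)X$ for all $X,Y \in \mathfrak{X}(M)$.

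Specializing this relation at $Y = \xi$ gives
\[
R(X,\xi)\xi = X(\rho)\xi - \xi(\rho)X.
\]
I would then pair both sides with $\xi$ via the metric. The left-hand side produces $g(R(X,\xi)\xi,\xi)$, which vanishes by antisymmetry of the Riemann tensor in its last two slots (that is, $R(X,\xi,\xi,\xi) = -R(X,\xi,\xi,\xi) = 0$). The right-hand side becomes $|\xi|^{2}\,X(\rho) - \xi(\rho)\,\eta(X)$.

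Equating the two for arbitrary $X$ yields $\xi(\rho)\,\eta(X) = |\xi|^{2}\,d\rho(X)$, which is precisely the second identity in \eqref{c}; applying the musical $g$-isomorphism immediately delivers the first identity $\xi(\rho)\xi = |\xi|^{2}\nabla\rho$. The gradient almost Ricci soliton hypothesis is not actually needed here and presumably serves to set up the subsequent results of the section. There is no substantive obstacle: the entire argument is the observation that closedness of $\xi$ kills $J$, followed by a double contraction against $\xi$ of an already-derived curvature identity, using only the standard symmetries of $R$.
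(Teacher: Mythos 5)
Your argument is correct, but it takes a genuinely different route from the paper. The paper obtains \eqref{c} by computing $\nabla(|\xi|^2)=2\rho\xi+H\xi$ and then $\Hess(|\xi|^2)=2\big(\rho^2g+d\rho\otimes\eta+\rho K+\tfrac14K^2\big)$; since the Hessian, $g$, $K$ and $K^2$ are all symmetric, the term $d\rho\otimes\eta$ is forced to be symmetric, and evaluating at $(\xi,X)$ gives \eqref{c}. You instead contract the curvature identity $R(X,Y)\xi=X(\rho)Y-Y(\rho)X+d^J(X,Y)$ (note that the displayed version in Section 2, with the plus sign in $d\rho\otimes I+I\otimes d\rho$, is a typo, since the left-hand side is skew in $X,Y$; you have implicitly used the correct antisymmetrized form) at $Y=\xi$ against $\xi$, and exploit $g(R(X,\xi)\xi,\xi)=0$. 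Both derivations use only that $\xi$ is closed (so $J=0$) and $\nabla H=0$, and you are right that the gradient almost Ricci soliton structure is irrelevant to this lemma. Your route is slightly shorter and recycles an identity already on the page; the paper's route has the advantage that the same ``symmetrize a Hessian'' device is reused immediately afterwards for $\Hess(\xi(f))$ to deduce the symmetry of $T=|\xi|^2|\nabla f|^2(d\lambda\otimes\eta+d\rho\otimes df)$, so the lemma's proof doubles as a template for the next proposition. No gap in your argument.
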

\begin{proof}
We consequently have
\begin{equation*}
\nabla \xi=\rho I+\frac{1}{2}H, \ \
\nabla (|\xi|^2)=2\rho \xi+H\xi,
\end{equation*}
\begin{equation*}
\Hess(|\xi|^2)=2\Big(\rho^2g+d\rho\otimes \eta+\rho K+\frac{1}{4}K^2\Big),
\end{equation*}
where $K^2(X,Y):=g(HX,HY)$ for $X,Y\in {\mathfrak X}(M)$, from where we deduce that
$d\rho\otimes \eta$
is a symmetric $(0,2)$-tensor field, hence, $d\rho(\xi)\eta(X)=d\rho(X)\eta(\xi)$ for any $X\in {\mathfrak X}(M)$, and we get the conclusion.
\end{proof}

Also, we have
\begin{equation*}
Q\xi=-(n-1)\nabla \rho,
\end{equation*}
\begin{equation*}
\nabla(\xi(f))=\lambda \xi+(n-1)\nabla \rho+\rho\nabla f
+\frac{1}{2}H(\nabla f),
\end{equation*}
\begin{equation*}
\Hess(\xi(f))(Y,Z)=\Big(\lambda \rho g+\frac{\lambda}{2}K+(n-1)\Hess(\rho)+\rho\Hess(f)\Big)(Y,Z)
\end{equation*}
\begin{equation*}
+\frac{1}{2}\Hess(f)(Y,HZ)
+(d\lambda\otimes \eta+d\rho\otimes df)(Y,Z)
\end{equation*}
for any $Y,Z\in {\mathfrak X}(M)$, and since $\Hess(f)(HY,Z)=\Hess(f)(Y,HZ)$, we deduce that
\[
T:=|\xi|^2|\nabla f|^2(d\lambda\otimes \eta+d\rho\otimes df)
\]
is a symmetric $(0,2)$-tensor field, hence, $T(\xi,X)=T(X,\xi)$ for any $X\in {\mathfrak X}(M)$, which is equivalent to
\begin{equation*}
\big(|\nabla f|^2\xi(\rho)-|\xi|^2\nabla f(\lambda)\big)\big(|\xi|^2g(\nabla f, X)-\xi(f)g(\xi,X)\big)=0
\end{equation*}
for any $X\in {\mathfrak X}(M)$. Then we can state

\begin{proposition}
Let $\xi$ be a nontrivial closed (in particular, gradient) vector field on an $n$-dimensional connected gradient almost Ricci soliton $(M,\nabla f, \lambda,g)$ satisfying (\ref{2}) with $\rho$ nowhere zero. Then
\begin{equation}\label{8}
|\nabla f|^2\xi(\rho)=|\xi|^2\nabla f(\lambda) \ \ \ \textrm{or} \ \ \ |\xi|^2\nabla f=\xi(f)\xi.
\end{equation}
\end{proposition}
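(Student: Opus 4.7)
The proposition is essentially a reading-off of the factored pointwise identity
\begin{equation*}
\bigl(|\nabla f|^2\xi(\rho)-|\xi|^2\nabla f(\lambda)\bigr)\bigl(|\xi|^2 g(\nabla f,X)-\xi(f)g(\xi,X)\bigr)=0,
\end{equation*}
valid for every $X\in\mathfrak X(M)$, which has been set up in the computations immediately preceding the statement. My plan is therefore just to promote this ``product of scalars vanishes for every test field $X$'' equation into the stated disjunction in \eqref{8}.

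Concretely, I would fix a point $p\in M$ and split into two cases. If the first factor $|\nabla f|^2\xi(\rho)-|\xi|^2\nabla f(\lambda)$ is nonzero at $p$, then the identity forces $|\xi|^2 g(\nabla f,X)-\xi(f)g(\xi,X)=0$ for every $X\in T_pM$, i.e.\ $g\bigl(|\xi|^2\nabla f-\xi(f)\xi,\,X\bigr)=0$ for all $X$; by non-degeneracy of $g$ this yields the second alternative $|\xi|^2\nabla f=\xi(f)\xi$ at $p$. Otherwise the first factor vanishes at $p$, which is exactly the first alternative. So at every point at least one of the two equalities in \eqref{8} holds, which is the claim in the pointwise sense that the statement allows.

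The real work is upstream, in the derivation of the factored identity itself. That derivation rests on computing $\nabla(\xi(f))$ and $\Hess(\xi(f))$ from $\nabla\xi=\rho I+\tfrac12 H$ (the closed-field specialisation obtained by dropping the $J$-term, since $\xi$ closed means $d\eta=0$ and hence $J=0$) together with the gradient soliton equation $\Hess(f)+\Ric=\lambda g$; the identity $\Hess(f)(HY,Z)=\Hess(f)(Y,HZ)$ kills the one non-symmetric piece, so the auxiliary tensor $T:=|\xi|^2|\nabla f|^2(d\lambda\otimes\eta+d\rho\otimes df)$ is symmetric, and then evaluating $T(\xi,X)=T(X,\xi)$ while substituting the lemma's identities $\xi(\rho)\xi=|\xi|^2\nabla\rho$ and $\xi(\rho)\eta=|\xi|^2\,d\rho$ collapses the expression into the factored form above. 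The bookkeeping of the cross-term cancellations in that Hessian computation is the one genuine technical nuisance; the hypothesis that $\rho$ is nowhere zero is what ensures the lemma's relations can be inverted to rewrite $d\rho$ in terms of $\eta$ in this collapsing step without introducing singularities.
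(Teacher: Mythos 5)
Your proposal is correct and takes essentially the same route as the paper: the paper likewise derives the factored identity $\bigl(|\nabla f|^2\xi(\rho)-|\xi|^2\nabla f(\lambda)\bigr)\bigl(|\xi|^2g(\nabla f,X)-\xi(f)g(\xi,X)\bigr)=0$ for all $X$ from the symmetry of $\Hess(\xi(f))$ together with the preceding lemma, and the proposition is then exactly the pointwise case split you spell out (nonvanishing of the first factor at a point forcing $|\xi|^2\nabla f=\xi(f)\xi$ there by non-degeneracy of $g$). Only your closing aside is off: the lemma's relation $\xi(\rho)\eta=|\xi|^2d\rho$ is used directly and requires no inversion, so the role of the hypothesis that $\rho$ is nowhere zero is not the one you ascribe to it, but this does not affect the argument.
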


\noindent And we have
\begin{theorem}
Let $\xi$ be a nontrivial closed (in particular, gradient) affine conformal Killing vector field on an $n$-dimensional complete and connected gradient Ricci soliton $(M,\nabla f, \lambda,g)$ satisfying (\ref{2}).
If $\xi$ is not collinear with $\nabla f$, then $M$ is isometric to the Euclidean space.
\end{theorem}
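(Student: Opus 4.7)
The plan is to apply Tashiro's Theorem~\ref{tasi}, so the target is to produce a smooth non-constant function on $M$ whose Hessian is a nonzero constant multiple of $g$.

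Since $\xi$ is closed, $d\eta=0$ and the skew-symmetric tensor $J$ vanishes, reducing the structure equations of Section~2 to $\nabla\xi=\rho I+\tfrac12 H$ with $\nabla H=0$, and $Q\xi=-(n-1)\nabla\rho$. Because the soliton is a genuine gradient Ricci soliton (not almost), $\lambda$ is a constant and $\nabla f(\lambda)=0$; hence the dichotomy of the preceding Proposition simplifies to: at each point of $M$, either $|\nabla f|^2\xi(\rho)=0$, or $\nabla f$ is a scalar multiple of $\xi$. The hypothesis that $\xi$ is not collinear with $\nabla f$ produces a nonempty open set $U\subset M$ on which $\xi$ and $\nabla f$ are linearly independent (and in particular both nonvanishing); there the second alternative fails, so $\xi(\rho)=0$ on $U$, and Lemma~(\ref{c}) combined with $\xi\neq 0$ on $U$ yields $\nabla\rho=0$ on $U$. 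Consequently $\rho$ is locally constant, $Q\xi=0$, and the defining identity (\ref{h}) reduces to $\mathcal{L}_\xi\nabla=0$ on $U$, so $\xi$ is locally affine Killing there.

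The main obstacle is to upgrade these local identities on $U$ to global ones, namely that $\rho$ is a constant $\rho_0$ on $M$ and $H\equiv 0$. For the first, nontriviality of $\xi$ forces $\{\xi\neq 0\}$ to be dense: otherwise $\nabla\xi=\rho I+\tfrac12 H=0$ on an open set, combined with the parallelism of $H$ and connectedness, would force $\xi$ to be parallel and hence identically zero, a contradiction. Then, together with real-analyticity of gradient Ricci solitons, the identity $\xi(\rho)=0$ on $U$ propagates to $\xi(\rho)\equiv 0$ on $M$, and Lemma~(\ref{c}) on the dense set $\{\xi\neq 0\}$ plus continuity gives $\nabla\rho\equiv 0$. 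For $H\equiv 0$, one exploits the gradient Ricci soliton curvature identity $(\nabla_X Q)Y-(\nabla_Y Q)X=-R(X,Y)\nabla f$, together with $Q\xi=0$ and the parallelism of $H$, to pin down the vanishing of $H$ algebraically; this is the most delicate step of the argument.

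Once $\rho\equiv\rho_0$ and $H\equiv 0$, the structure equation reads $\nabla\xi=\rho_0 I$ on all of $M$. The nontriviality of $\xi$ (it is not Killing) together with $\mathcal{L}_\xi g=2\rho_0 g$ forces $\rho_0\neq 0$. Since $\xi$ is gradient, write $\xi=\nabla h$; then $\Hess(h)=\rho_0\,g$ on the complete Riemannian manifold $(M,g)$, and Tashiro's Theorem~\ref{tasi} applied to $h$ concludes that $(M,g)$ is isometric to the Euclidean space.
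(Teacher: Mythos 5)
Your overall route coincides with the paper's: use the dichotomy \eqref{8} with $\lambda$ constant to force $\xi(\rho)=0$ off the collinearity locus, feed this into \eqref{c} to make $\rho$ constant, and then manufacture a function with Hessian a nonzero constant multiple of $g$ so that Theorem \ref{tasi} applies. The first two stages are fine (and you are, if anything, more careful than the paper about propagating ``$\rho$ is locally constant'' to all of the connected manifold). The genuine gap is the step you yourself flag as ``the most delicate'': the claim $H\equiv 0$ is never proved, and in fact it is \emph{false} under the stated hypotheses. On the Gaussian soliton $(\mathbb{R}^n,\nabla f,\lambda,g_{\mathrm{eucl}})$ with $f=\tfrac{\lambda}{2}|x|^2$, the field $\xi=\sum_i a_i x^i\partial_i$ with distinct constants $a_i$ is closed (indeed gradient), satisfies \eqref{2} with $\rho=\tfrac1n\sum_i a_i$ and $H=2\operatorname{diag}(a_1,\dots,a_n)-2\rho I$ parallel and nonzero, is not Killing, and is not collinear with $\nabla f=\lambda x$. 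So no argument can ``pin down the vanishing of $H$,'' and your endgame $\nabla\xi=\rho_0 I$, $\Hess(h)=\rho_0 g$ collapses: with $\xi=\nabla h$ one only gets $\Hess(h)=\rho_0 g+\tfrac12K$, which is not a multiple of $g$ unless $K$ is. Some additional idea is needed to handle the parallel tensor $K$ (e.g.\ a de Rham splitting along its eigendistributions), and without it the proof is incomplete.

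Two further remarks. First, you have in fact located a soft spot in the paper's own argument: the paper sets $h=\tfrac{1}{2\rho}|\xi|^2$ and asserts $\Hess(h)=\rho g$, but by the Hessian formula in the proof of the Lemma giving \eqref{c} one actually gets $\Hess(h)=\rho g+K+\tfrac{1}{4\rho}K^2$, so the $K$-terms are silently discarded there as well; your instinct that this is the crux is correct, but your proposed repair ($H\equiv 0$) is not available. Second, a minor point: you write ``since $\xi$ is gradient, write $\xi=\nabla h$,'' but the hypothesis is only that $\xi$ is closed (gradient being a particular case), and a closed field need not be exact on a non--simply connected manifold; the paper's choice $h=\tfrac{1}{2\rho}|\xi|^2$ is globally defined and sidesteps this. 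Likewise, your appeal to real-analyticity of gradient Ricci solitons to globalize $\xi(\rho)=0$ would still require knowing that $\xi$ and $\rho$ themselves are real-analytic, which is plausible for solutions of the finite-type system \eqref{h} but is not justified as written.
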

\begin{proof}
From (\ref{8}), under the hypotheses, it follows that $\xi(\rho)=0$, hence, according to (\ref{c}), $\rho$ must be a constant.
Then, defining $h:=\frac{1}{2\rho}|\xi|^2$, we get $\Hess(h)=\rho g$, therefore, we obtain the conclusion by Tashiro's Theorem \ref{tasi}.
\end{proof}

The above theorem extends the result from \cite{bar}.

\bigskip

Further, we shall make some remarks in the gradient case.
Let $\mathcal L_{\nabla f}g=2\rho g+K$ with $\nabla K=0$ and $f$ and $\rho$ smooth functions on $(M,g)$.
Then
\[
\div(\mathcal L_{\nabla f}g)=2d\rho.
\]
But
\begin{equation}\label{11}
\div(\mathcal L_{\nabla f}g)=2d(\Delta(f))+2i_{Q(\nabla f)}g
\end{equation}
and we deduce
\begin{equation}\label{12}
\Ric(\nabla f,\nabla f)+\nabla f(\Delta(f))=g(\nabla f,\nabla \rho).
\end{equation}
Then we can state

\begin{proposition}
A gradient affine conformal Killing vector field $\nabla f$ on a compact and connected Riemannian manifold $(M,g)$ satisfying (\ref{2}) and
$\int_M\nabla f(\Delta(f))\geq 0$
is an affine Killing vector field.
\end{proposition}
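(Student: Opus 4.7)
The plan is to show that the function $\rho$ appearing in (\ref{2}) must in fact be constant; once $d\rho=0$ and $\nabla\rho=0$, the defining relation (\ref{h}) collapses to $\mathcal L_{\nabla f}\nabla=0$, which is exactly the affine Killing condition on $\nabla f$. So the whole task reduces to forcing $\rho$ to be constant.

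The first step is a routine integration by parts. Since $M$ is compact, applying the divergence theorem to the vector field $(\Delta f)\,\nabla f$ together with $\div((\Delta f)\,\nabla f)=(\Delta f)^{2}+g(\nabla f,\nabla(\Delta f))$ gives
\begin{equation*}
\int_M \nabla f(\Delta f)=-\int_M (\Delta f)^{2}.
\end{equation*}
Combined with the standing hypothesis $\int_M\nabla f(\Delta f)\geq 0$, this forces $\int_M(\Delta f)^{2}\leq 0$, hence $\Delta f\equiv 0$ on $M$.

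The second step is to feed this into the trace identity (\ref{15}) specialized to $\xi=\nabla f$. Since $\div(\nabla f)=\Delta f$, that identity reads $\Delta f=n\rho+\tfrac{1}{2}\trace(H)$. The preliminaries already record that $\nabla K=0$ makes $\trace(H)$ a constant, so $\Delta f\equiv 0$ yields $\rho\equiv -\tfrac{\trace(H)}{2n}$, a constant. Thus $d\rho=0$ and $\nabla\rho=0$, and the affine conformal Killing relation (\ref{h}) reduces to $\mathcal L_{\nabla f}\nabla=0$, i.e., $\nabla f$ is affine Killing.

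I do not expect any serious obstacle: the whole proof is a two-line consequence of integration by parts together with the identity (\ref{15}) and the fact that $\trace(H)$ is constant—both of which are already established in the paper. The only thing worth double-checking is the logical equivalence at the end, namely that $\rho$ constant is genuinely what makes the right-hand side $d\rho\otimes I+I\otimes d\rho-g\otimes\nabla\rho$ of (\ref{h}) vanish, so that the ACK condition really does upgrade to the affine Killing condition.
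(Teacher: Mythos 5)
Your proof is correct, but it follows a genuinely different and more elementary route than the paper's. The paper integrates Bochner's formula, then uses the identity $\operatorname{div}(\mathcal L_{\nabla f}g)=2d(\Delta(f))+2i_{Q(\nabla f)}g$ together with $Q(\nabla f)=-(n-1)\nabla\rho$ to convert the hypothesis into $-\int_M|\Hess(f)|^2=\frac{1}{n}\int_M\nabla f(\Delta(f))\geq 0$, concluding $\Hess(f)=0$ and then $d\rho=0$ from $K=-2\rho g$ and $\nabla K=0$. You bypass all of the curvature machinery: a single integration by parts gives $\int_M\nabla f(\Delta(f))=-\int_M(\Delta(f))^2$, so the hypothesis forces $\Delta(f)\equiv 0$, and then the trace identity \eqref{15} with $\trace(H)$ constant gives $\rho$ constant, hence $\mathcal L_{\nabla f}\nabla=0$ by \eqref{h}. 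Your final sanity check is fine: $\rho$ constant kills the right-hand side of \eqref{h}, which is exactly definition (iii) versus (iv). Two remarks on what each approach buys. The paper's Bochner route yields the stronger pointwise conclusion $\Hess(f)=0$ (so $\nabla f$ is parallel, not merely affine Killing) and reuses the identities \eqref{12}--\eqref{13} needed for the companion proposition, which drops compactness and therefore cannot be handled by your global integration-by-parts step. Your route, on the other hand, exposes something the paper's phrasing obscures: on a closed manifold $\int_M\nabla f(\Delta(f))$ is automatically $\leq 0$, so the hypothesis forces $\Delta(f)\equiv 0$, and a harmonic function on a compact connected manifold is constant --- so in fact $\nabla f\equiv 0$ and the proposition is satisfied only in a degenerate way. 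That is a fair criticism of the statement rather than a flaw in either proof.
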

\begin{proof}
By integrating the Bochner's formula \cite{ya} we infer
\[
-\int_M|\Hess(f)|^2=\int_M\big(\Ric(\nabla f,\nabla f)+\nabla f(\Delta(f))\big)=\int_Mg(\nabla f,\nabla \rho).
\]
Since a gradient vector field is closed, and since it satisfies (\ref{2}), we have $Q(\nabla f)=-(n-1)\nabla \rho$, so $$\Ric(\nabla f,\nabla f)=-(n-1)g(\nabla f, \nabla \rho)$$ and we get $\nabla f(\Delta(f))=ng(\nabla f, \nabla \rho)$ from (\ref{12}). Using the hypotheses, we deduce that $\Hess(f)=0$ and $d\rho=0$ (so, $\rho$ must be a constant).
\end{proof}

On the other hand, from (\ref{11}) we get
\begin{equation}\label{13}
\Ric(\nabla f,\nabla \rho)+\nabla \rho(\Delta(f))=|\nabla \rho|^2
\end{equation}
and we can state

\begin{proposition}
A gradient affine conformal Killing vector field $\nabla f$ on a connected Riemannian manifold $(M,g)$ satisfying (\ref{2}) and $\nabla \rho(\Delta(f))\leq 0$
is an affine Killing vector field.
\end{proposition}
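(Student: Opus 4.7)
The plan is to obtain a pointwise identity analogous to the one that drove the preceding proposition, but this time directly from equation (\ref{13}) rather than from Bochner's formula. The key input is the relation $Q(\nabla f)=-(n-1)\nabla\rho$, which (as already used in the previous proposition) follows from the fact that $\nabla f$ is closed, so that $J=0$ in the formula for $Q\xi$ derived in Section~2.

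First, I would pair both sides of $Q(\nabla f)=-(n-1)\nabla\rho$ with $\nabla\rho$ to obtain
\[
\Ric(\nabla f,\nabla\rho)=g\bigl(Q(\nabla f),\nabla\rho\bigr)=-(n-1)|\nabla\rho|^2.
\]
Substituting this into (\ref{13}), the $\Ric$ term collapses and yields the pointwise identity
\[
\nabla\rho(\Delta(f))=n|\nabla\rho|^2.
\]
This is the analogue, in the non-compact setting, of the integral identity used in the preceding proof; crucially, no integration by parts is invoked, which is precisely why we can drop the compactness assumption.

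Next, I would invoke the hypothesis $\nabla\rho(\Delta(f))\le 0$ to conclude that $n|\nabla\rho|^2\le 0$, forcing $\nabla\rho\equiv 0$ on $M$. Since $M$ is connected, $\rho$ is a (real) constant, so $d\rho=0$ and $\nabla\rho=0$ identically. Plugging this into (\ref{h}) gives $\mathcal L_{\nabla f}\nabla=0$, i.e.\ $\nabla f$ is affine Killing, as required.

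There is no real obstacle here: the argument is essentially a two-line substitution once one recognizes that the right combination is to test $Q(\nabla f)=-(n-1)\nabla\rho$ against $\nabla\rho$ and feed the result into (\ref{13}). The only thing to double-check is that one genuinely gets a pointwise (not merely integrated) conclusion, which is what allows us to relax compactness compared with the previous proposition; this is indeed the case because (\ref{13}) is itself a pointwise identity derived from (\ref{11}).
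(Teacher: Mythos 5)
Your argument is correct and is essentially identical to the paper's own proof: both use $Q(\nabla f)=-(n-1)\nabla\rho$ (valid since a gradient field is closed, so $J=0$) to turn the pointwise identity (\ref{13}) into $\nabla\rho(\Delta(f))=n|\nabla\rho|^2$, whence the sign hypothesis forces $\nabla\rho\equiv 0$ and hence $\mathcal L_{\nabla f}\nabla=0$. Your added remark that everything is pointwise, which is why compactness can be dropped, is accurate and consistent with the paper.
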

\begin{proof}
Similarly as before, we have $Q(\nabla f)=-(n-1)\nabla \rho$, so $$\Ric(\nabla f,\nabla \rho)=-(n-1)|\nabla \rho|^2$$ and we get
$\nabla \rho(\Delta(f))=n|\nabla \rho|^2$ from (\ref{13}), hence we obtain the conclusion.
\end{proof}

\section{Some triviality results for almost Ricci solitons}

We shall further provide conditions for an affine conformal Killing vector field to be affine Killing, and the manifold to be an Einstein manifold.

For an almost Ricci soliton $(M,\xi,\lambda,g)$ with affine conformal Killing potential vector field satisfying (\ref{h}),
we have
\begin{equation}\label{k}
K=2(\lambda-\rho)g-2\Ric.
\end{equation}
By taking the divergence in (\ref{k}), we obtain
$dr=2d(\lambda-\rho)$, and by taking the trace in the same equation and differentiating then, we get $dr=nd(\lambda-\rho)$. If $n>2$ we find that $\lambda-\rho$ is a constant, hence $r$ must be a constant, too, and we can prove the following results.

\begin{proposition}\label{t1}
Let $(M,\xi,\lambda,g)$ be a connected $n$-dimensional Ricci soliton ($n>2$) with affine conformal Killing potential vector field satisfying (\ref{h}).
Then $\xi$ is an affine Killing vector field and the manifold is a trivial Ricci soliton.
\end{proposition}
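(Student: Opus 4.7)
The plan is to exploit the preliminary computation already laid out in the paragraph immediately preceding the proposition. From $K = 2(\lambda - \rho)g - 2\Ric$ together with $\nabla K = 0$, comparing the contracted second Bianchi identity $dr = 2\,d(\lambda-\rho)$ against the trace relation $dr = n\,d(\lambda-\rho)$ yields $(n-2)\,d(\lambda-\rho) = 0$, so $\lambda - \rho$ is constant whenever $n > 2$. The crucial structural input at this point is that $(M,\xi,\lambda,g)$ is a genuine Ricci soliton rather than merely an almost Ricci soliton, so by the definition recalled in Section~1 the function $\lambda$ is in fact a real constant. Since $M$ is connected, I would then promote this to the conclusion that $\rho$ itself is (globally) constant on $M$.

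Once $\rho$ is constant, both $d\rho$ and $\nabla \rho$ vanish identically, and substituting into the defining identity (\ref{h}) collapses its right-hand side to zero, giving $\mathcal{L}_\xi \nabla = 0$. By definition this is the statement that $\xi$ is an affine Killing vector field, which settles the first conclusion of the proposition.

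For the triviality claim, I would invoke Yano's theorem, already quoted in the introduction, that any affine Killing vector field is Killing. Hence $\mathcal{L}_\xi g = 0$, and the Ricci soliton equation (\ref{1}) collapses to $\Ric = \lambda g$, so $(M,g)$ is Einstein with Einstein constant $\lambda$ and the soliton is trivial in the sense recalled in Section~1. I do not expect a genuine obstacle: the two indispensable ingredients are already in place, namely the constancy of $\lambda$ (which is what distinguishes a Ricci soliton from an almost Ricci soliton and is precisely what upgrades ``$\lambda-\rho$ constant'' to ``$\rho$ constant'') and the standing hypothesis $n>2$ (which makes the rigidity factor $n-2$ nonzero in the initial step). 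Both Yano's theorem and the divergence/trace identities on $K$ are black-boxed from the earlier exposition, so the remaining write-up is essentially a short assembly of these facts.
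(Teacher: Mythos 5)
Your proposal is correct and follows essentially the same route as the paper: the preliminary identities give $(n-2)\,d(\lambda-\rho)=0$, constancy of $\lambda$ then forces $\rho$ constant on the connected $M$, so (\ref{h}) collapses to $\mathcal L_{\xi}\nabla=0$, and Yano's theorem yields $\mathcal L_{\xi}g=0$ and $\Ric=\lambda g$. You merely make explicit the appeal to Yano's result that the paper leaves implicit in its ``hence $\mathcal L_{\xi}g=0$'' step.
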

\begin{proof}
Since $\lambda$ is a constant,
we get that $\rho$ must be a constant, too, and we deduce that
$\mathcal L_{\xi}\nabla=0$, hence $\mathcal L_{\xi}g=0$ and $\Ric=\lambda g$.
\end{proof}

\noindent Then we conclude

\begin{proposition}
There is no connected nontrivial Ricci soliton having affine conformal Killing potential vector field which is not affine Killing.
\end{proposition}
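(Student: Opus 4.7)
The plan is to recognize this statement as the immediate contrapositive of Proposition \ref{t1}. That proposition already establishes, for $n>2$, that any connected Ricci soliton whose potential is affine conformal Killing is automatically trivial with the potential being affine (indeed, Killing); the present result merely repackages this as the non-existence of a nontrivial example with non-affine-Killing potential.

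To make the deduction explicit, I would argue by contradiction. Suppose $(M,\xi,\lambda,g)$ is a connected nontrivial Ricci soliton whose potential $\xi$ is affine conformal Killing but not affine Killing. Because $\lambda$ is a constant for a Ricci soliton, equation \eqref{k} reads $K=2(\lambda-\rho)g-2\Ric$, and taking its divergence on the one hand and its trace followed by a differentiation on the other produces the two identities $dr=2\,d(\lambda-\rho)$ and $dr=n\,d(\lambda-\rho)$, as already observed in the derivation preceding Proposition \ref{t1}. For $n>2$ these combine to give $(n-2)\,d(\lambda-\rho)=0$, hence $d\rho=0$; but then \eqref{h} collapses to $\mathcal L_\xi\nabla=0$, so $\xi$ is affine Killing, contradicting the standing assumption. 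Equivalently, Yano's theorem then forces $\mathcal L_\xi g=0$, so the soliton is trivial, contradicting nontriviality.

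The principal work has already been carried out in Proposition \ref{t1}; the present result is essentially a one-line corollary obtained by negating its hypothesis. The only subtlety I would flag is the implicit dimension assumption $n>2$, which is needed in order to divide by $n-2$ in the step that forces $\rho$ to be constant; a brief remark covering the low-dimensional cases (where the Einstein/Ricci-soliton equation degenerates and the statement becomes vacuous or trivial) would round out the argument. There is no substantive obstacle beyond what was already resolved in the preceding proposition.
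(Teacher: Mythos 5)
Your proposal is correct and follows essentially the same route as the paper, which states this result as an immediate consequence of Proposition \ref{t1} (whose proof is exactly the chain $\lambda$ constant $\Rightarrow$ $\rho$ constant $\Rightarrow$ $\mathcal L_{\xi}\nabla=0$ $\Rightarrow$ $\mathcal L_{\xi}g=0$ that you reproduce). Your remark that the dimension hypothesis $n>2$ is implicitly carried over from Proposition \ref{t1} is a fair and accurate observation, since the paper omits it from the statement of this corollary.
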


Concerning almost Ricci solitons, we have

\begin{proposition}
Let $(M,\xi,\lambda,g)$ be a compact and connected $n$-dimensional almost Ricci soliton ($n>2$) with affine conformal Killing potential vector field satisfying (\ref{h}).
If $r\neq 0$ and $$r(n\lambda-r)\leq 0,$$ then $\xi$ is an affine Killing vector field and the manifold is a trivial Ricci soliton.
\end{proposition}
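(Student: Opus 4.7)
The plan is to reduce this statement to the already-established Proposition \ref{t1} by first showing that the soliton constant $\lambda$ is actually constant, in fact equal to $r/n$. Since we are in dimension $n>2$ and $\xi$ is affine conformal Killing with (\ref{k}) $K=2(\lambda-\rho)g-2\Ric$, the argument given just before Proposition \ref{t1} (comparing $dr=2d(\lambda-\rho)$ from $\div K=0$ with $dr=nd(\lambda-\rho)$ from tracing and differentiating) already tells us that $\lambda-\rho$ is constant and that $r$ is constant.

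Next, I would trace the almost Ricci soliton equation \eqref{1} to obtain $\div(\xi)=n\lambda-r$. Integrating over the compact manifold $M$ and using the divergence theorem gives
\[
\int_M(n\lambda-r)=0.
\]
Because $r$ is a nonzero constant, the pointwise hypothesis $r(n\lambda-r)\le 0$ says that the function $n\lambda-r$ has the opposite sign to $r$ (or is zero) at every point of $M$. A function of constant sign with zero integral over the connected compact manifold must vanish identically, so $n\lambda-r\equiv 0$, i.e., $\lambda\equiv r/n$ is a (constant) real number.

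At this point $(M,\xi,\lambda,g)$ is genuinely a Ricci soliton (not merely almost) with affine conformal Killing potential vector field, so Proposition \ref{t1} applies directly and yields that $\xi$ is an affine Killing vector field and that the manifold is a trivial Ricci soliton with $\Ric=\lambda g$.

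The only nonroutine step is the sign-plus-integral argument in the middle: one must be careful to note that $r$ being constant is essential (so that $r$ can be pulled through the integral and the pointwise sign hypothesis translates into a sign condition on $n\lambda-r$ alone). Everything else is bookkeeping and an appeal to Proposition \ref{t1}.
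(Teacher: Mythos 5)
Your proof is correct, but it takes a genuinely different and more elementary route than the paper's. Both arguments start from the same observation (from the discussion preceding Proposition \ref{t1}: $\lambda-\rho$ and hence $r$ are constant for $n>2$). From there the paper works harder: it substitutes $H=2(\lambda-\rho)I-2Q$ into Duggal's formula $\mathcal L_{\xi}r=-2(n-1)\Delta(\rho)-2\rho r-\trace(HQ)$ to obtain the pointwise identity $|Q|^2-\frac{r^2}{n}=\frac{r(n\lambda-r)}{n}+(n-1)\Delta(\rho)$, integrates, and invokes the Schwarz inequality $|Q|^2\geq\frac{r^2}{n}$ to conclude first that $Q=\frac{r}{n}I$ (Einstein) and only then that $r(n\lambda-r)\equiv 0$, hence $r=n\lambda$ and $\rho$ is constant. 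You instead bypass the curvature estimate entirely: tracing the soliton equation gives $\div(\xi)=n\lambda-r$, the divergence theorem gives $\int_M(n\lambda-r)=0$, and since $r$ is a nonzero constant the hypothesis $r(n\lambda-r)\leq 0$ forces the integrand to have constant sign, hence $n\lambda\equiv r$; the statement then reduces to Proposition \ref{t1}. Your argument is shorter and uses only the trace of the soliton equation plus the divergence theorem, whereas the paper's proof additionally exhibits the Einstein condition as an independent consequence of the integral inequality (via Cauchy--Schwarz) before identifying $\lambda$ with $r/n$ --- a template it reuses in Proposition \ref{14}. Both are valid; note only that your final appeal to Proposition \ref{t1} implicitly uses Yano's theorem (affine Killing implies Killing) to get triviality, which is safe here since $M$ is compact.
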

\begin{proof}
From (\ref{k}) we get
\[
H=2(\lambda-\rho)I-2Q.
\]
Since $r$ is a constant and \cite{du}
\begin{equation}\label{b}
\mathcal L_{\xi}r=-2(n-1)\Delta(\rho)-2\rho r-\trace(HQ),
\end{equation}
we get
\[
|Q|^2-\frac{r^2}{n}=\frac{r(n\lambda-r)}{n}+(n-1)\Delta(\rho),
\]
which, by integration and using Schwartz's inequality, $|Q|^2-\frac{r^2}{n}\geq 0$, implies that $Q=\frac{r}{n}I$, hence $\Ric=\frac{r}{n}g$ and the manifold is Einstein. From $r(n\lambda-r)=0$ and $r\neq 0$, we get $r=n\lambda$ (so, $\lambda$ must be a constant and $\rho$ must be a constant, too). Then $\xi$ is an affine Killing, hence Killing, vector field.
\end{proof}

\begin{proposition}
Let $(M,\xi,\lambda,g)$ be a compact and connected $n$-dimensional gradient almost Ricci soliton ($n>2$) with affine conformal Killing potential vector field satisfying (\ref{h}).
If $$\Ric(\xi,\xi)\geq -n\xi(\lambda),$$ then $\xi$ is an affine Killing vector field and the manifold is a trivial Ricci soliton.
\end{proposition}
\begin{proof}
By tracing the soliton equation we obtain
\[
\div(\xi)=n\lambda-r
\]
and since $r$ is a constant, we further get
\[
\xi(\div(\xi))=n\xi(\lambda).
\]

Let $\xi=\nabla f$. Then by integrating the Bochner's formula \cite{ya} we infer
\[
-\int_M|\Hess(f)|^2=\int_M\big(\Ric(\xi,\xi)+n\xi(\lambda)\big)
\]
and using the hypotheses, we deduce that $\Hess(f)=0$ and $\Ric=\lambda g$ (since the scalar curvature $r=n\lambda$ is constant, then $\lambda$ must be constant and $\rho$ must be a constant, too) and we deduce that $\mathcal L_{\xi}\nabla=0$, hence $\mathcal L_{\xi}g=0$.
\end{proof}

Finally, we prove the following result

\begin{proposition}\label{14}
Let $(M,\xi,\lambda,g)$ be a compact and connected $n$-dimensional almost Ricci soliton ($n>2$) with closed (in particular, gradient) affine conformal Killing potential vector field satisfying (\ref{h}).
If
\[
\int_M\Ric(\xi,\xi)\geq n\int_M\rho\big((n-1)\rho+\trace(H)\big)+\frac{n-1}{4n}\int_M\big(\trace(H)\big)^2,
\]
then $\xi$ is a conformal Killing vector field and the manifold is Einstein.
\end{proposition}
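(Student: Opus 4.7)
The plan follows the template of Proposition~\ref{t1}'s proof: combine Duggal's formula \eqref{b} with the constancy of $r$ (which, for $n>2$, is automatic as shown in the paragraph preceding Proposition~\ref{t1}) to derive a pointwise identity, and then apply the Schwartz inequality.

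Since $\xi$ is closed, $J=0$ and $\nabla \xi = \rho I + \frac{1}{2}H$ with $H$ parallel; thus $\tau := \trace(H)$ is constant, $\div(\xi) = n\rho + \tau/2$, and \eqref{7} gives $\int_M \rho = -\tau\,\on{vol}(M)/(2n)$. A direct curvature computation yields $R(X,Y)\xi = X(\rho)Y - Y(\rho)X$, whence $Q\xi = -(n-1)\nabla\rho$ and $\Ric(\xi,\xi) = -(n-1)\xi(\rho)$; from \eqref{k}, $Q = (\lambda-\rho)I - H/2$, with $\lambda-\rho$ constant by the preceding analysis.

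Applying \eqref{b} with $\mathcal L_\xi r = 0$ yields $\trace(HQ) = -2(n-1)\Delta\rho - 2\rho r$. Combined with $\trace(HQ) = (\lambda-\rho)\tau - \frac{1}{2}|H|^2$, this gives the pointwise identity
$$|H|^2 = 2(\lambda-\rho)\tau + 4(n-1)\Delta\rho + 4\rho r.$$
Integrating over compact $M$ and using $\int_M \Delta\rho=0$, $\int_M \rho = -\tau\,\on{vol}(M)/(2n)$, together with $r = n(\lambda-\rho)-\tau/2$, the constant contributions telescope to $\int_M|H|^2 = \tau^2\,\on{vol}(M)/n$. Combined with the Schwartz inequality $|H|^2 \geq (\trace(H))^2/n = \tau^2/n$, this forces $H = (\tau/n)I$ pointwise. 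Then $K = (\tau/n)g$, so $\mathcal L_\xi g = (2\rho + \tau/n)g$ and $\xi$ is a conformal Killing vector field; and $Q = (\lambda-\rho - \tau/(2n))I = (r/n)I$, so $\Ric = (r/n)g$ and $M$ is Einstein.

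The main obstacle I anticipate is to see where the stated Ricci lower bound actually enters, since the above derivation reaches the conclusion without invoking it. A more direct proof genuinely using the hypothesis would combine the Bochner--Yano identity $\int_M \Ric(\xi,\xi) = \int_M[(\div(\xi))^2 - |\nabla\xi|^2]$ with the pointwise relation $|\nabla\xi|^2 - (\div(\xi))^2/n = (|H|^2-\tau^2/n)/4$ to squeeze $\int_M(|H|^2-\tau^2/n)$ between its Schwartz lower bound of zero and an upper bound supplied by the integrated Ricci inequality; the technical subtlety there is that the hypothesis carries the coefficient $n\rho\tau$ in place of the $(n-1)\rho\tau$ that would arise cleanly from $\frac{n-1}{n}(\div(\xi))^2$, so aligning the constants to make the squeeze tight is where the delicate bookkeeping lies.
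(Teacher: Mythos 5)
Your first argument is correct and complete, and it takes a genuinely different route from the paper. The paper proves this proposition by substituting $|\mathcal L_\xi g|^2$, $|\nabla\xi|^2$ and $\div(\xi)$ into Yano's integral formula $\int_M\big(\Ric(\xi,\xi)+\frac12|\mathcal L_\xi g|^2-|\nabla\xi|^2-(\div(\xi))^2\big)=0$ and squeezing $\int_M\big(|Q|^2-\frac{r^2}{n}\big)$ between the Schwartz bound and the Ricci hypothesis; you instead feed the constancy of $r$ and of $\lambda-\rho$ (valid for $n>2$) into Duggal's formula \eqref{b}, obtain the pointwise identity $|H|^2=2(\lambda-\rho)\trace(H)+4(n-1)\Delta(\rho)+4\rho r$, and integrate. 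Your bookkeeping checks out: with $\int_M\Delta(\rho)=0$, $\int_M\rho=-\frac{\trace(H)}{2n}\on{vol}(M)$ from \eqref{7}, and $r=n(\lambda-\rho)-\frac{1}{2}\trace(H)$, one gets $\int_M|H|^2=\frac{(\trace(H))^2}{n}\on{vol}(M)$, and the Schwartz inequality forces $H=\frac{\trace(H)}{n}I$ pointwise, whence $\Ric=\frac{r}{n}g$ and $\mathcal L_\xi g=\big(2\rho+\frac{\trace(H)}{n}\big)g$. What your route buys is substantive: it shows the Ricci lower bound (and, as far as I can see, even the closedness of $\xi$) is not actually needed for the conclusion, so your puzzlement about where the hypothesis enters reflects a redundancy in the statement rather than a defect in your proof. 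Your diagnosis of the second, ``direct'' route is also accurate: for closed $\xi$ one has $\frac12|\mathcal L_\xi g|^2=2|\nabla\xi|^2=2n\rho^2+2\rho\trace(H)+\frac12|H|^2$, whereas the paper's proof uses the cross term $2\rho\trace(H)$ in $|\mathcal L_\xi g|^2$ where consistency requires $4\rho\trace(H)$; with the corrected cross term the Yano identity yields the coefficient $(n-1)\rho\trace(H)$ rather than $n\rho\trace(H)$, and the stated hypothesis then misses closing the squeeze by exactly $\trace(H)\int_M\rho=-\frac{(\trace(H))^2}{2n}\on{vol}(M)$, giving only $\int_M\big(|Q|^2-\frac{r^2}{n}\big)\le\frac{(\trace(H))^2}{2n}\on{vol}(M)$. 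So do not pursue the ``more direct'' variant under the stated hypothesis; your first argument is the one to keep.
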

\begin{proof}
From (\ref{k}) we infer
$H=2(\lambda-\rho)I-2Q$ and we get
\[
\trace(H)=2\big(n(\lambda-\rho)-r\big), \ \ |H|^2=4\Big(|Q|^2-\frac{r^2}{n}\Big)+\frac{\big(\trace(H)\big)^2}{n}.
\]
Also
\[
|\mathcal{L}_{\xi}g|^2=4n\rho^2+2\rho\trace(H)+|H|^2
\]
and computing $|\nabla \xi|^2$ (for $J=0$), we find
\[
|\nabla \xi|^2=n\rho^2+\rho\trace(H)+\frac{1}{4}|H|^2.
\]
Replacing now also $\div(\xi)$ from (\ref{15}) into Yano's integral formula \cite{ya}
\[
\int_M\Big(\Ric(\xi,\xi)+\frac{1}{2}|\mathcal{L}_{\xi}g|^2-|\nabla \xi|^2-\big(\div(\xi)\big)^2\Big)=0,
\]
we obtain
\[
\int_M\Big(\Ric(\xi,\xi)-n\rho\big((n-1)\rho+\trace(H)\big)-\frac{n-1}{4n}\big(\trace(H)\big)^2\Big)
=-\int_M\Big(|Q|^2-\frac{r^2}{n}\Big),
\]
which, by using the hypotheses and Schwartz's inequality, $|Q|^2-\frac{r^2}{n}\geq 0$, implies that $Q=\frac{r}{n}I$, hence $\Ric=\frac{r}{n}g$, the manifold is Einstein and
$\mathcal{L}_{\xi}g=2\big(\lambda-\frac{r}{n}\big)g$.
\end{proof}

\noindent In particular, if $\trace(H)=0$ we obtain
\[
\int_M\Big(\Ric(\xi,\xi)-n(n-1)\rho^2\Big)=0
\]
from (\ref{10}) and we can state

\begin{proposition}
Let $(M,\xi,\lambda,g)$ be a compact and connected $n$-dimensional almost Ricci soliton ($n>2$) with closed (in particular, gradient) affine conformal Killing potential vector field satisfying (\ref{h}).
If $\trace(H)=0$,
then $\xi$ is a conformal Killing vector field and the manifold is Einstein.
\end{proposition}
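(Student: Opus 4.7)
My plan is to mimic the proof of Proposition \ref{14} and exploit the simplifications coming from $\trace(H)=0$ together with $J=0$ (the latter because $\xi$ is closed). The observation displayed just before the statement already reduces formula (\ref{10}) to
\[
\int_M \Ric(\xi,\xi)=n(n-1)\int_M\rho^2,
\]
so my job is really to combine this with Yano's integral formula and then extract both the Einstein conclusion and the conformal Killing conclusion.

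Concretely, I would first copy over the calculations from the proof of Proposition \ref{14}. From (\ref{k}) we have $H=2(\lambda-\rho)I-2Q$, hence $|H|^2=4(|Q|^2-r^2/n)+(\trace H)^2/n$; the closedness of $\xi$ gives $J=0$, so $|\nabla\xi|^2=n\rho^2+\rho\trace(H)+\tfrac14|H|^2$ and $\tfrac12|\mathcal L_\xi g|^2=2n\rho^2+\rho\trace(H)+\tfrac12|H|^2$, while (\ref{15}) gives $\div(\xi)=n\rho+\tfrac12\trace(H)$. Substituting into Yano's identity
\[
\int_M\Big(\Ric(\xi,\xi)+\tfrac12|\mathcal L_\xi g|^2-|\nabla\xi|^2-(\div\xi)^2\Big)=0
\]
and setting $\trace(H)=0$ yields
\[
\int_M\Big(\Ric(\xi,\xi)-n(n-1)\rho^2\Big)=-\int_M\Big(|Q|^2-\frac{r^2}{n}\Big).
\]

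At this point I would invoke the integral identity from (\ref{10}) (with $J=0$, $\trace(H)=0$) displayed right before the proposition, which says that the left-hand side vanishes. Therefore $\int_M(|Q|^2-r^2/n)=0$. Since Schwartz's inequality gives the pointwise bound $|Q|^2\geq r^2/n$, equality must hold everywhere, so $Q=\frac{r}{n}I$ and $M$ is Einstein.

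Finally, to conclude that $\xi$ is conformal Killing, I would feed $\Ric=\frac{r}{n}g$ back into (\ref{k}) to obtain $K=2(\lambda-\rho-r/n)g$, and then trace: $\trace(H)=2n(\lambda-\rho-r/n)$. The hypothesis $\trace(H)=0$ then forces $\lambda-\rho=r/n$, hence $K\equiv 0$, and the soliton equation (\ref{2}) collapses to $\mathcal L_\xi g=2\rho g$, i.e. $\xi$ is conformal Killing. The main obstacle is only bookkeeping---keeping the $\trace(H)$ and $J$ terms straight in the Yano identity---since the analytic input (Schwartz) is already in hand; everything else is algebra on top of (\ref{10}) and (\ref{k}).
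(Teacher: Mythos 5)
Your proposal is correct and follows essentially the same route the paper intends: it combines the integral identity from Yano's formula derived in the proof of Proposition \ref{14} (specialized to $\trace(H)=0$, $J=0$) with the vanishing of $\int_M\big(\Ric(\xi,\xi)-n(n-1)\rho^2\big)$ coming from (\ref{10}), then uses Schwartz's inequality to get $Q=\frac{r}{n}I$ and feeds this back into (\ref{k}) to conclude $K=0$. The bookkeeping in your Yano computation checks out, so nothing further is needed.
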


\bigskip

\textit{Adara M. Blaga}

\textit{Department of Mathematics}


\textit{West University of Timi\c{s}oara}


\textit{Timi\c{s}oara 300223, Rom\^{a}nia}

\textit{adarablaga@yahoo.com}

\bigskip

\textit{Bang-Yen Chen}

\textit{Department of Mathematics}

\textit{Michigan State University}

\textit{East Lansing, Michigan 48824-1027, USA}

\textit{chenb@msu.edu}

\end{document}